\def\R{\mathbb R}
\def\N{\mathbb N}
\def\P{\mathbb P}
\def\J{\mathbb J}
\def\S{\mathbb S}
\newtheorem{theorem}{Theorem}[section]
\newtheorem{proposition}[theorem]{Proposition}
\theoremstyle{definition}
\newtheorem{remark}[theorem]{Remark}
\def\signal{\bigskip\begin{center} {\sc Ayadi Lazrag\par\vspace{3mm}
Univ. Nice Sophia Antipolis,\\ CNRS, LJAD, UMR 7351\\ 06100 Nice\\ FRANCE\par\vspace{3mm}
email:} \tt{ayadi.lazrag@gmail.com}\end{center}}
\begin{document}

\title[A geometric control proof of Franks' lemma for Geodesic Flows]{A geometric control proof of linear Franks' lemma for Geodesic Flows}

\author{A. Lazrag}

\begin{abstract}
We provide an elementary proof of the Franks lemma for geodesic flows that uses basic tools of geometric control theory.
\end{abstract}

\maketitle

\section{Introduction}

In 1971, John Franks stated and proved an elegant lemma (see \cite[lemma 1.1]{franks71}) showing how to perturb the derivative of a diffeomorphism along a periodic orbit by small perturbations of the diffeomorphism on a neighbourhood of the orbit. Since the original Franks' lemma concerns  diffeomorphisms, its proof is quite simple. The Franks lemma has since been proven in other interesting contexts such as geodesic flows (see \cite{cp02} and \cite{contreras10}) and more generally Hamiltonians flows (see \cite{vivier05}). In this work, we focus on the Franks lemma for geodesic flows. This problem was first studied in the particular case of surfaces by Contreras and Paternain (see \cite[Theorem 4.1] {cp02}). They proved that on any surface, the linearized Poincar\'e map along any geodesic segment of length $1$ can be freely perturbed in a neighborhood inside $\mbox{Sp}(1)$ by a $C^2$-small perturbation of the metric, where for every $m \in \N^*$, the symplectic group $\mbox{Sp}(m)$ is defined by 
$$
\mbox{Sp}(m):= \Bigl\{A \in M_{2m}(\R) \, \vert \,  A^*\J A=\J \Bigr\},  
$$
with 
$$
\J= \left[ \begin{matrix}
0&I_m\\
-I_m&0\\
\end{matrix} \right].
$$  
In 2010, Contreras studied the higher-dimensional analogue (see \cite[Theorem 7.1]{contreras10}). He generalized the previous result for a special set of metrics: those such that every geodesic segment of length $\frac{1}{2}$ has a point whose curvature matrix has all its eigenvalues distinct and separated by a uniform bound. The proof was long and technical.\\

Let $M$ be a closed manifold of dimension $n \geq 2$ endowed with a Riemannian metric $g$ and $S^gM$ be the unit tangent bundle. Given a geodesic arc of length $1$
$$
\gamma : \left[0,1\right] \longrightarrow S^gM,
$$
with unit speed and $\Sigma_0$ and  $\Sigma_1$ transverse sections at $\gamma(0)$ and $\gamma(1)$ respectively. Let $\P_g(\Sigma_0,\Sigma_1,\gamma)$ be a Poincar\'e map going from $\Sigma_0$ to $\Sigma_1$. One can choose $\Sigma_0$ and $\Sigma_1$ such that the \textit{linearized Poincar\'e} map
$$
P_g(\gamma)(t):=d_{\gamma(0)}\P_g(\Sigma_0,\Sigma_1,\gamma)
$$
is a symplectic endomorphism of $\mathbb{R}^{n-1} \times (\mathbb{R}^{n-1})^*$ (in local coordinates).
Let $\mathcal{R}^k(M)$, $k \in \N \cup \left\{+\infty \right\}$ be the set of all $C^k$ Riemannian metrics $g$ on $M$. If $n \geq 3$, we denote by $\mathcal{G}_1$ the set of Riemannian metrics on $M$ such that every unit geodesic segment of lenght $1$ admits a point where the curvature matrix has distinct eigenvalues. Denote by $\mathcal{R}^k(M,\mathcal{G}_1)$ the set of all Riemannian metrics $g$ on $M$ such that if $n=2$, $g \in \mathcal{R}^k(M)$ and for $n \geq 3$, $g \in \mathcal{R}^k(M) \cap  \mathcal{G}_1$. For every $k \geq 2$, $\mathcal{R}^k(M,\mathcal{G}_1)$ is an open and dense subset of $\mathcal{R}^k(M)$. Consider the map $S$ : $\mathcal{R}^k(M,\mathcal{G}_1) \longrightarrow \mbox{Sp}(n-1)$ given by $S(\bar{g})=P_{\bar{g}}(\gamma)(1)$. The following theorem summarizes the Franks lemma for geodesic flows on surfaces and its higher-dimensional analogue (under the Contreras assumption on the spectrum of the curvature matrix) with estimates on the size of perturbation in terms of the radius of the ball of $\mbox{Sp}(n-1)$.
\begin{theorem}
Let $g_0 \in \mathcal{R}^k(M,\mathcal{G}_1), 2 \leq k \leq \infty$. There exists $\bar{r},K > 0$ such that for any geodesic arc $\gamma$ of $g_0$ of lenght $1$ and any $r \in (0,\bar{r})$,
$$
B\Bigl(S(g_0),Kr\Bigr) \cap \mbox{Sp}(n-1) \subset S\Bigl(B_{C^k}(g_0,r)\Bigr).
$$
\end{theorem}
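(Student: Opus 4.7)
The plan is to translate the problem into a quantitative surjectivity statement for the end-point map of an associated control system on $\mathrm{Sp}(n-1)$, and then to conclude by a uniform open mapping argument.

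First, I would work in Fermi coordinates along $\gamma$ so that the linearized Poincar\'e map is realized as the fundamental solution $X_g(t)$ of the Jacobi equation, which in matrix form reads
\[
\dot X(t) = A_g(t)\,X(t), \qquad A_g(t) = \begin{pmatrix} 0 & I_{n-1} \\ -K_g(t) & 0 \end{pmatrix}, \qquad X(0)=I_{2(n-1)},
\]
where $K_g(t)$ is the curvature matrix along $\gamma$ in the chosen orthonormal parallel frame. Thus $S(g)=X_g(1)$ and everything reduces to understanding how $X_g(1)$ depends on $K_g$. A direct computation, implemented via metric perturbations of graph or conformal type supported in a tubular neighbourhood of $\gamma([0,1])$, shows that any prescribed symmetric perturbation $U(t)$ of $K_{g_0}(t)$ can be realized by a $C^k$ metric perturbation whose norm is comparable to $\|U\|_{C^0}$; conversely, a $C^k$ perturbation of $g_0$ of size $r$ induces a $C^0$ perturbation of $K$ of size $O(r)$.

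Second, I would view the resulting problem as a control system on $\mathrm{Sp}(n-1)$: set $B(U):=\bigl(\begin{smallmatrix}0&0\\-U&0\end{smallmatrix}\bigr)$ for $U$ symmetric and consider
\[
\dot X(t) = \bigl(A_{g_0}(t)+B(U(t))\bigr)X(t),
\]
with control $U\in L^\infty([0,1],\mathrm{Sym}(n-1))$. Its end-point map $E\colon U\mapsto X_U(1)$ is smooth, and the theorem will follow from a uniform linear lower bound on the surjectivity of $dE(0)$ onto $T_{S(g_0)}\mathrm{Sp}(n-1)$: a quantitative inverse function theorem then yields a constant $K$ such that $B(S(g_0),Kr)\cap \mathrm{Sp}(n-1)$ lies inside $E(\{\|U\|\le r\})$, and the correspondence above translates this back into the claimed inclusion in terms of $C^k$ metric balls.

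The main obstacle, and the place where geometric control theory enters, is establishing this uniform surjectivity of $dE(0)$. The image of $dE(0)$ is spanned, after translation to $T_I \mathrm{Sp}(n-1)$, by matrices of the form $\mathrm{Ad}(X_0(t)^{-1})B(U)$ for $t\in[0,1]$ and $U\in\mathrm{Sym}(n-1)$. Showing that this span equals the full symplectic Lie algebra $\mathfrak{sp}(n-1)$ is a Lie-bracket generating question: differentiating $\mathrm{Ad}(X_0(t)^{-1})B(U)$ in $t$ produces iterated brackets of $B(\mathrm{Sym})$ with the drift $A_{g_0}(t)$. A short algebraic lemma should show that a few successive such brackets, evaluated at a time $t_\ast$ where the hypothesis holds (trivial for $n=2$, the distinct-eigenvalue condition defining $\mathcal{G}_1$ for $n\ge 3$), already suffice to span $\mathfrak{sp}(n-1)$. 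The remaining and most delicate step is to upgrade this pointwise surjectivity into an explicit uniform lower bound, depending only on $g_0$ and on the spectral gap of $K_{g_0}$, so that $\bar r$ and $K$ can be chosen independently of the geodesic $\gamma$; for this one invokes the compactness of $S^{g_0}M$ and the continuity of $\gamma\mapsto K_{g_0}$ along the flow.
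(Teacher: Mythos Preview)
Your proposal is correct and follows essentially the same route as the paper: Fermi coordinates, conformal metric perturbation realizing an arbitrary symmetric control on the curvature, the Jacobi equation viewed as a bilinear control system on $\mathrm{Sp}(n-1)$, surjectivity of $dE(0)$ via iterated Lie brackets of the $B$-matrices with the drift $A_{g_0}(t)$ evaluated at a time where the eigenvalue hypothesis holds, then a quantitative inverse function theorem plus compactness of $S^{g_0}M$ for uniformity in $\gamma$. The only substantive gap between your sketch and the paper is that the ``short algebraic lemma'' you invoke is the actual heart of the argument: the paper computes $B_{ij}^0,\ldots,B_{ij}^3$ explicitly and shows (using the distinct-eigenvalue condition for $n\ge 3$, and just $B_{11}^0,B_{11}^1,B_{11}^2$ for $n=2$) that they span $T_{I_{2m}}\mathrm{Sp}(m)$, so you should expect to carry this out rather than defer it.
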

Let $\mathcal{F}:=\left\{\xi_1,...,\xi_N \right\}$ be a finite set of geodesic segments that are transverse to $\gamma$. We have the following result.
\begin{proposition}
For any tubular neighborhood $W$ of $\gamma$ and any finite set $\mathcal{F}$ of transverse geodesics, the support of the perturbation can be contained in $W \setminus V$ for some neighborhood $V$ of the transverse geodesics $\mathcal{F}$.
\end{proposition}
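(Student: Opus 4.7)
The plan is to reduce the problem to a subarc of $\gamma$ that lies in the complement of a prescribed neighborhood of $\mathcal{F}$, and apply Theorem 1.1 on that subarc alone. Since each $\xi_i$ is a geodesic segment transverse to $\gamma$, its intersection with $\gamma$ consists of isolated points; by compactness, the full intersection $\gamma([0,1]) \cap \bigcup_i \xi_i$ is a finite set, corresponding to times $t_1 < \cdots < t_L$ in $(0,1)$. For $\delta > 0$ sufficiently small, the intervals $(t_\ell - \delta, t_\ell + \delta)$ are disjoint and contained in $(0,1)$, and one can choose a tubular neighborhood $V$ of $\mathcal{F}$ such that $\gamma^{-1}(V) \subset \bigcup_\ell (t_\ell - \delta, t_\ell + \delta)$.

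By the pigeonhole principle, one of the complementary subintervals $[a,b] \subset [0,1] \setminus \bigcup_\ell (t_\ell - \delta, t_\ell + \delta)$ has length bounded below by a positive universal constant $c > 0$. Setting $\tilde\gamma := \gamma|_{[a,b]}$, I pick a tubular neighborhood $\tilde W$ of $\tilde\gamma$ with $\tilde W \subset W \setminus V$, which is possible by compactness of $\tilde\gamma$ and disjointness from $\overline V$. The next step is to apply (a suitable rescaled version of) Theorem 1.1 to $\tilde\gamma$ with support inside $\tilde W$: any symplectic matrix sufficiently close to $P_{g_0}(\tilde\gamma)(b-a)$ should be realized as $P_{\bar g}(\tilde\gamma)(b-a)$ for some metric $\bar g$ that coincides with $g_0$ outside $\tilde W$.

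Since $\bar g$ agrees with $g_0$ on neighborhoods of $\gamma|_{[0,a]}$ and $\gamma|_{[b,1]}$, the linearized Poincaré map along $\gamma$ for the perturbed metric factors as
$$
P_{\bar g}(\gamma)(1) = P_{g_0}(\gamma|_{[b,1]}) \cdot P_{\bar g}(\tilde\gamma)(b-a) \cdot P_{g_0}(\gamma|_{[0,a]}),
$$
with the two outer factors being fixed invertible symplectic matrices independent of the perturbation. Multiplication by these factors distorts the ball in $\mathrm{Sp}(n-1)$ by only a bounded bilipschitz factor, so the conclusion of Theorem 1.1 is preserved, with the perturbation now supported in $\tilde W \subset W \setminus V$, which is the desired conclusion.

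The main technical obstacle is that Theorem 1.1 is stated for geodesic arcs of unit length, whereas $\tilde\gamma$ is only guaranteed to have length at least $c$. One therefore needs to verify that the geometric-control proof of Theorem 1.1 adapts to arcs of any positive length $\ell \geq c$, with the constants $\bar r$ and $K$ depending only on a positive lower bound for $\ell$. This should follow from the essentially local, scale-uniform nature of the accessibility argument underpinning Theorem 1.1, in which small-time controllability of the symplectic Poincaré map by metric perturbations is the driving mechanism; once this is in hand, the reduction above is purely combinatorial.
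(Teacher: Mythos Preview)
Your approach is essentially correct but genuinely different from the paper's. The paper does \emph{not} restrict to a single subarc and re-apply Theorem~1.1; instead it works directly with the control-theoretic machinery. From Proposition~2.2 one already has $p$ smooth controls $u^1,\dots,u^p$ with support in $(0,1)$ such that the finite-dimensional End-Point map $\lambda \mapsto E^{I_{2m},1}\bigl(\sum_j \lambda_j u^j\bigr)$ is a local diffeomorphism onto $\mathrm{Sp}(m)$ near $\bar X(1)$. One then replaces each $u^j$ by a control $\tilde u^j$ obtained by zeroing out $u^j$ on small neighborhoods $\mathcal{N}_i$ of the intersection times $t_i$. Since the $\tilde u^j$ are $L^2$-close to the $u^j$ when the $\mathcal{N}_i$ are small, and the End-Point map is $C^1$, the perturbed finite-dimensional map $\lambda \mapsto E^{I_{2m},1}\bigl(\sum_j \lambda_j \tilde u^j\bigr)$ is still a local diffeomorphism. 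This immediately gives perturbations supported away from the $t_i$, hence in $W\setminus V$ for a suitable $V$.

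The paper's route is shorter and avoids exactly the technical obstacle you flag: there is no need to re-establish first-order controllability on a shorter interval, because the existing controls are only mildly modified. Your route, by contrast, must re-run the Lie-bracket computation on the subarc $[a,b]$, and here there is a point you do not address: in dimension $n\ge 3$ the proof of Theorem~1.1 requires a time $\bar t$ at which $K(\bar t)$ has simple spectrum, and the Contreras hypothesis only guarantees such a $\bar t$ somewhere in $[0,1]$, not in every subinterval of length $c$. You would need to choose the complementary subinterval containing (or near) this $\bar t$, which is possible since the simple-spectrum condition is open and the intersection times are finite, but it deserves explicit mention. Also, your constant $c$ depends on the number $L$ of intersection points and on $\delta$, so it is not ``universal''; this is harmless for the qualitative statement of Proposition~1.2 but worth stating accurately.
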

Franks' Lemma type results has many interesting applications. For instance, in \cite{cp02} Contreras and Paternain used it to show that the set of $C^{\infty}$ Riemannian metrics on $\S^2$ or $\R\P^2$ whose geodesic flow has positive topological entropy is open and dense in the $C^2$ topology. In \cite{contreras10} Contreras used the Franks lemma to prove that a $C^2$ generic Riemannian metric has a non-trivial hyperbolic basic set in its geodesic flow. The author says that this perturbation lemma is "the main technical difficulty of the paper".
Recently, Visscher (see \cite{visscher13}) gave a shorter and less technical proof for the two cases.\\

The purpose of the present paper is to provide a simple proof of the Franks lemma using geometric control tools. Such techniques have been initially introduced by Rifford and Ruggiero in \cite{rr12}. We mention that recently in a joint work with Rifford and Ruggiero, we obtained a Franks lemma at lower order $(r < K \sqrt{\delta}$) without the Contreras assumption (see \cite{lrr13}). \\

The paper is organized as follows. In the next section, we introduce some preliminaries in geometric control theory. We describe the relationship between local controllability and the properties of the End-Point mapping. In Section 3, we provide the proof of Theorem 1.1. Then, in Section 4, we provide the proof of Proposition 1.2.\\

\textbf{Acknowledgments:}
The author is very grateful to Ludovic Rifford for his suggestions, comments and careful reading of the paper. Special thanks to Lanouar Lazrag for his interesting remarks.

\section{Preliminaries in geometric control theory}\label{prelcontrol}

Our aim here is to provide sufficient conditions for first order local controllability results. This kind of results could be developed for nonlinear control systems on smooth manifolds.
For sake of simplicity, we restrict our attention here to the case of affine control systems on the set of (symplectic) matrices. We refer the interested reader to \cite{as04,coron07,riffordbook} for a further study in control theory.
\subsection{The End-Point mapping}
Let  us a consider a \textit{bilinear control system} on $M_{2m}(\R)$ (with $m, k \geq 1$), of the form
\begin{eqnarray}
\label{syscontrol}
\dot{X}(t)  = A(t) X(t) +  \sum_{i=1}^k u_i(t) B_i  X(t), \qquad \mbox{for a.e.} \quad  t,
\end{eqnarray}
where the \textit{state} $X(t)$ belongs to $M_{2m}(\R)$, the \textit{control} $u(t)$ belongs to $\R^k$, $t \in [0,T] \mapsto A(t)$ (with $T>0$) is a smooth map valued in $M_{2m}(\R)$,
and $B_1, \ldots, B_k$ are $k$ matrices in $M_{2m}(\R)$. Given $\bar{X}\in M_{2m}(\R)$ and $\bar{u} \in L^2 \bigl([0,T]; \R^k\bigr)$, the Cauchy problem
\begin{eqnarray}
\label{cauchyX}
\dot{X} (t) = A(t) X (t) + \sum_{i=1}^k \bar{u}_i(t) B_i X(t), \quad \mbox{for a.e.} \quad  t \in [0,T], \qquad X(0) = \bar{X},
\end{eqnarray}
possesses a unique solution $X_{\bar{X},\bar{u}}(\cdot)$. The \textit{End-Point mapping} associated with $\bar{X}$ in time $T>0$ is defined as
$$
\begin{array}{rcl}
E^{\bar{X},T} \, : \, L^2 \bigl([0,T]; \R^k\bigr)  & \longrightarrow & M_{2m}(\R) \\
u & \longmapsto & X_{\bar{X},u}(T).
\end{array}
$$
It is a smooth mapping. Given $\bar{X}\in M_{2m}(\R)$, $\bar{u} \in L^2 \bigl([0,T]; \R^k\bigr)$, and setting $\bar{X} (\cdot) :=  X_{\bar{X},\bar{u}}(\cdot)$, the differential of $E^{\bar{X},T} $ at $\bar{u}$ is given by the
linear operator
$$
\begin{array}{rccc}
D_{\bar{u}}E^{\bar{X},T}  \, : \, &L^2\bigl([0,T]; \R^k\bigr) & \longrightarrow & M_{2m}(\R) \\
&v & \longmapsto & Y(T),
\end{array}
$$
where $Y(\cdot)$ is the unique solution to the Cauchy problem
\begin{eqnarray}
\label{linearized}
\left\{
\begin{array}{l}
\dot{Y}(t) = A(t) Y(t) + \sum_{i=1}^k v_i(t) B_i \bar{X}(t) \quad \mbox{for a.e.} \quad t \in [0,T],\\
Y(0)= 0.
\end{array}
\right.
\end{eqnarray}
Note that if we denote by $S(\cdot)$ the solution to the Cauchy problem
\begin{equation}
\label{eq:St}
\left\{
\begin{array}{l}
\dot{S}(t) = A(t) S(t),\\
S(0)=I_{2m},
\end{array}
\right.
\end{equation}
then there holds
\begin{eqnarray}\label{raab}
 D_{\bar{u}}E^{\bar{X},T}  (v) =  \sum_{i=1}^k S(T) \int_0^{T} v_i(t) S(t)^{-1} B_i \bar{X}(t) \,dt,
\end{eqnarray}
for every $v \in L^2([0,T]; \R^k)$. \\

Let $\mbox{Sp}(m)$ be the symplectic group in $M_{2m}(\R)$ ($m\geq 1$), that is the smooth submanifold of matrices $X \in M_{2m}(\R)$ satisfying

$$
X^{*} \J X = \J \quad  \mbox{ where } \J = \left[ \begin{matrix} 0 & I_m \\ -I_m & 0 \end{matrix} \right].
$$
$\mbox{Sp}(m)$ has dimension $p:=2m(2m+1)/2$. Denote by $\mathcal{S}(2m)$ the set of symmetric matrices in $M_{2m}(\R)$. The tangent space to  $\mbox{Sp}(m)$ at the identity matrix is given by
$$
T_{I_{2m}} \mbox{Sp}(m) = \Bigl\{ Y \in M_{2m}(\R) \, \vert \, \J Y \in \mathcal{S}(2m)  \Bigr\}.
$$
Therefore, if there holds
\begin{eqnarray}\label{condsymp}
\J A (t), \, \J B_1, \, \ldots, \, \J B_k \in \mathcal{S}(2m) \qquad \forall t \in [0,T],
\end{eqnarray}
then $\mbox{Sp}(m)$ is invariant with respect to (\ref{syscontrol}), that is for every $\bar{X}\in \mbox{Sp}(m)$ and $\bar{u} \in L^2 \bigl([0,T]; \R^k\bigr)$,
$$
X_{\bar{X},u}(t) \in \mbox{Sp}(m) \qquad \forall t \in [0,T].
$$
In particular, this means that for every $\bar{X} \in \mbox{Sp}(m)$, the End-Point mapping $E^{\bar{X},T}$ is valued in $\mbox{Sp}(m)$. Given $\bar{X}\in \mbox{Sp}(m)$ and $\bar{u} \in L^2 \bigl([0,T]; \R^k\bigr)$,
we are interested in local controllability properties of (\ref{syscontrol}) around $\bar{u}$. The control system  (\ref{syscontrol}) is called \textit{controllable around} $\bar{u}$ in $\mbox{Sp}(m)$ (in time $T$) if
for every final state $X\in \mbox{Sp}(m)$ close to $X_{\bar{X},u}(T)$ there is a control $u\in L^2 \bigl([0,T]; \R^k\bigr)$ which steers $\bar{X}$ to $X$, that is such that $E^{\bar{X},T}(u)=X$.
Such a property is satisfied as soon as $E^{\bar{X},T}$ is locally open at $\bar{u}$.

\subsection{First order controllability results}

Given $T>0$, $\bar{X}\in \mbox{Sp}(m)$, a mapping $t\in [0,T] \mapsto A(t) \in M_{2m}(\R)$ and $k$ matrices  $B_1, \ldots, B_k \in M_{2m}(\R)$ satisfying (\ref{condsymp}), and $\bar{u} \in L^2 \bigl([0,T]; \R^k\bigr)$,
we say that the control system (\ref{syscontrol}) is  \textit{controllable at first order around} $\bar{u}$ in $\mbox{Sp}(m)$ if the mapping $E^{\bar{X},T} : L^2 \bigl([0,T]; \R^k\bigr) \rightarrow \mbox{Sp}(m)$ is a
\textit{submersion } at $\bar{u}$, that is if the linear operator
$$
D_{\bar{u}}E^{\bar{X},T} \, : \, L^2\bigl([0,T]; \R^k\bigr)  \longrightarrow  T_{\bar{X}(T)} \mbox{Sp}(m),
$$
is surjective (with $\bar{X}(T):=X_{\bar{X},u}(T)$). The following sufficient condition for first order controllability is given in \cite[Proposition 2.1]{rr12}. For sake of completeness, we provide its proof.

\begin{proposition}
\label{LIEPROP1}
Let $T>0$, $t\in [0,T] \mapsto A(t)$ a smooth mapping and $B_1, \ldots, B_k \in M_{2m}(\R)$ be matrices in $M_{2m}(\R)$ satisfying (\ref{condsymp}). Define the $k$ sequences of smooth mappings
$$
\{B_1^j\}, \ldots, \{B_k^j\} : [0,T] \rightarrow  T_{I_{2m}} \mbox{Sp}(m)
$$
by
\begin{eqnarray}
\label{brackets}
\left\{
\begin{array}{l}
B_i^0(t) = B_i\\
B_i^j(t) = \dot{B}_{i}^{j-1}(t) + B_i^{j-1}(t) A(t)  - A(t) B_i^{j-1}(t),
\end{array}
\right.
\end{eqnarray}
for every $t\in [0,T]$ and every $i \in \{1, \ldots, k\}$. Assume that there exists some $\bar{t} \in [0,T]$ such that
\begin{eqnarray}\label{conditionLIEPROP}
\mbox{Span} \Bigl\{ B_i^j(\bar{t}) \, \vert \, i \in \{1,\ldots, k\}, j\in \N\Bigr\} = T_{I_{2m}} \mbox{Sp}(m).
\end{eqnarray}
Then for every $\bar{X} \in \mbox{Sp}(m)$, the control system (\ref{syscontrol}) is controllable at first order around $\bar{u}\equiv 0$.
\end{proposition}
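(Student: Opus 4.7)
The plan is to reduce surjectivity of $D_0 E^{\bar{X},T}$ to a statement about a simpler linear map into $T_{I_{2m}}\mathrm{Sp}(m)$, and then argue by duality together with an induction that identifies the iterated derivatives of a conjugation with exactly the matrices $B_i^j(t)$ appearing in (\ref{brackets}). First I would observe that hypothesis (\ref{condsymp}) gives $A(t)\in T_{I_{2m}}\mathrm{Sp}(m)$, hence $S(t)\in\mathrm{Sp}(m)$ for all $t$, and that for $\bar{u}\equiv 0$ one has $\bar{X}(t)=S(t)\bar{X}$. Inserted into (\ref{raab}), this yields
$$
D_0 E^{\bar{X},T}(v)\;=\;S(T)\left[\sum_{i=1}^k \int_0^T v_i(t)\, S(t)^{-1}B_i S(t)\,dt\right]\bar{X}.
$$
Because $S(T),\bar{X}\in\mathrm{Sp}(m)$, the map $Y\mapsto S(T)Y\bar{X}$ is a linear isomorphism carrying $T_{I_{2m}}\mathrm{Sp}(m)$ onto $T_{\bar{X}(T)}\mathrm{Sp}(m)$. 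Therefore surjectivity of $D_0 E^{\bar{X},T}$ onto $T_{\bar{X}(T)}\mathrm{Sp}(m)$ is equivalent to surjectivity of the linear operator
$$
\Phi\,:\,v\longmapsto \sum_{i=1}^k\int_0^T v_i(t)\, S(t)^{-1}B_i S(t)\,dt
$$
onto $T_{I_{2m}}\mathrm{Sp}(m)$; note that each integrand lies in $T_{I_{2m}}\mathrm{Sp}(m)$ since this space is invariant under symplectic conjugation.

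Next I would argue by contradiction: if $\Phi$ is not surjective, there exists a nonzero linear functional $\lambda$ on $T_{I_{2m}}\mathrm{Sp}(m)$ that vanishes on its image. Testing against controls supported on one coordinate at a time and using the classical fact that $\int_0^T v_i(t)g_i(t)\,dt=0$ for every $v_i\in L^2$ forces the continuous function $g_i$ to be identically zero, I obtain
$$
f_i(t)\;:=\;\lambda\bigl(S(t)^{-1}B_i S(t)\bigr)\;\equiv\;0,\qquad i=1,\ldots,k,\ t\in[0,T].
$$

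The key computation, which I would do as the main technical step, is the identity
$$
\frac{d^{j}}{dt^{j}}\bigl[S(t)^{-1}B_i S(t)\bigr]\;=\;S(t)^{-1}B_i^{j}(t)\,S(t)\qquad \forall\,j\in\N,
$$
proved by induction on $j$. The base case $j=0$ is tautological. For the inductive step I use $\frac{d}{dt}S(t)^{-1}=-S(t)^{-1}A(t)$ (which follows from differentiating $S(t)^{-1}S(t)=I_{2m}$ and (\ref{eq:St})), so that
$$
\frac{d}{dt}\bigl[S(t)^{-1}B_i^{j}(t)S(t)\bigr]\;=\;S(t)^{-1}\bigl[\dot{B}_i^{j}(t)+B_i^{j}(t)A(t)-A(t)B_i^{j}(t)\bigr]S(t),
$$
which by the recursion (\ref{brackets}) equals $S(t)^{-1}B_i^{j+1}(t)S(t)$. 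This is the only place where any real calculation enters, and I expect this induction to be the main (though short) obstacle.

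Finally, since $f_i\equiv 0$, all derivatives of $f_i$ vanish at the particular time $\bar{t}$ of (\ref{conditionLIEPROP}); combined with the identity above, this gives
$$
\lambda\bigl(S(\bar{t})^{-1}B_i^{j}(\bar{t})\,S(\bar{t})\bigr)\;=\;0\qquad \forall\,i\in\{1,\ldots,k\},\ j\in\N.
$$
Because $S(\bar{t})\in\mathrm{Sp}(m)$, conjugation by $S(\bar{t})$ is a linear automorphism of $T_{I_{2m}}\mathrm{Sp}(m)$, so the functional $\tilde{\lambda}(Y):=\lambda\bigl(S(\bar{t})^{-1}Y S(\bar{t})\bigr)$ is again nonzero on $T_{I_{2m}}\mathrm{Sp}(m)$. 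But $\tilde{\lambda}$ vanishes on every $B_i^{j}(\bar{t})$, contradicting the spanning assumption (\ref{conditionLIEPROP}). Hence $\Phi$ is surjective, and therefore so is $D_0E^{\bar{X},T}$, which proves that (\ref{syscontrol}) is controllable at first order around $\bar{u}\equiv 0$.
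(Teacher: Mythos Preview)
Your proof is correct and follows essentially the same strategy as the paper's: argue by duality, show the annihilating functional vanishes on $S(t)^{-1}B_iS(t)$ for all $t$, differentiate repeatedly to produce the $B_i^j(\bar t)$ via the same recursion, and contradict (\ref{conditionLIEPROP}). The only difference is cosmetic: you first factor $D_0E^{\bar X,T}(v)=S(T)\,\Phi(v)\,\bar X$ and work with an abstract linear functional on $T_{I_{2m}}\mathrm{Sp}(m)$, whereas the paper stays in $T_{\bar X(T)}\mathrm{Sp}(m)$ and uses an explicit orthogonal matrix $Y$ with the trace pairing.
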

\begin{proof}
If $D_{\bar{u}}E^{\bar{X},T}$ is not onto, there is a nonzero matrix $Y\in M_{2m}(\R)$ such that 
$$
\bar{X}(T)^* \J Y \in \mathcal{S}(2m)
$$
and
$$
Tr\Bigl(Y^*D_{\bar{u}}E^{\bar{X},T}(v)\Bigr)=0 \qquad v\in L^2\bigl([0,T]; \R^k\bigr). 
$$
By (\ref{raab}), this can be written as
$$
\sum_{i=1}^k\int_0^T v_i(t) Tr(Y^*S(T)S(t)^{-1}B_i\bar{X}(t)) \, dt =0 \quad \forall v \in L^2\bigl([0,T]; \R^k\bigr).
$$
Taking for every $i \in \left\{1,...,k\right\}$,
$$
v_i(t):=Tr(Y^*S(T)S(t)^{-1}B_i\bar{X}(t)) \quad t \in [0,T],
$$
we obtain that 
\begin{equation}\label{0901}
Tr\Bigl(Y^*S(T)S(t)^{-1}B_i\bar{X}(t)\Bigr)=0 \quad \forall t \in [0,T].
\end{equation}
The above equality at $t=\bar{t}$ yields 
$$
Tr\Bigl(Y^*S(T)S(\bar{t})^{-1}B^0_i(\bar{t})\bar{X}(\bar{t})\Bigr)=0.
$$
Using that $\frac{d}{dt}(S(t)^{-1})=-S(t)^{-1}A(t), \dot{\bar{X}}(t)=A(t)\bar{X}(t)$ and differentiating (\ref{0901}) at $t=\bar{t}$ again and again gives
$$
Tr\Bigl(Y^*S(T)S(\bar{t})^{-1}B^j_i(\bar{t})\bar{X}(\bar{t})\Bigr)=0 \quad \forall j\in \N, \forall i \in \left\{1,...,k\right\}.
$$
By (\ref{condsymp}), we have 
$$
\bar{X}(T)^* \J \Bigl(S(T)S(\bar{t})^{-1}B^j_i(\bar{t})\bar{X}(\bar{t})\Bigr) \in \mathcal{S}(2m).
$$
So all the matrices $S(T)S(\bar{t})^{-1}B^j_i(\bar{t})\bar{X}(\bar{t})$ belong to $T_{\bar{X}(T)} \mbox{Sp}(m)$. Since the matrix $S(T)S(\bar{t})^{-1}$ is invertible and (\ref{conditionLIEPROP}) holds, we infer that 
$$
Tr(Y^*H)=0 \quad \forall H \in T_{\bar{X}(T)} \mbox{Sp}(m)
$$
which yields a contradiction.
\end{proof}
As a corollary, we deduce a local controllability property on $\mbox{Sp}(m)$.
\begin{proposition}
\label{PROP2}
Assume that assumptions of Proposition \ref{LIEPROP1} hold. Then, for every $\bar{X} \in \mbox{Sp}(m)$ and $T > 0$, there are $\mu, \nu > 0$, p smooth controls $u^1,\cdots,u^p : [0,T] \rightarrow \R^k$ with $Supp(u^j) \subset (0,T)$ for $j=1,...,p$ and a smooth mapping 
$$
U=(U_1, \cdots, U_p) :  B \Bigl( \bar{X}(T) ,\mu \Bigr) \cap \mbox{Sp}(m)  \longrightarrow B(0,\nu)
$$
with $U\Bigl(\bar{X}(T)\Bigr)=0$ such that for every $X \in B \Bigl( \bar{X}(T) ,\mu \Bigr) \cap \mbox{Sp}(m)$,
$$
E^{\bar{X},T}  \left(  \sum_{j=1}^{p} U_j ( X ) u^j \right) = X.
$$
\end{proposition}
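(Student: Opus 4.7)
The strategy is to reduce the infinite-dimensional controllability statement of Proposition \ref{LIEPROP1} to a finite-dimensional local inversion on the manifold $\mbox{Sp}(m)$, using $p=\dim \mbox{Sp}(m)$ well-chosen controls.

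First I would exploit the conclusion of Proposition \ref{LIEPROP1}: the linear map $D_0 E^{\bar{X},T}:L^2([0,T];\R^k)\to T_{\bar{X}(T)}\mbox{Sp}(m)$ is surjective. Pick $p$ controls $v^1,\ldots,v^p\in L^2([0,T];\R^k)$ whose images $D_0 E^{\bar{X},T}(v^j)$ form a basis of $T_{\bar{X}(T)}\mbox{Sp}(m)$. The next, and essentially only nontrivial, step is to replace these $L^2$ controls by smooth controls $u^1,\ldots,u^p$ with $\mbox{Supp}(u^j)\subset (0,T)$ while retaining the basis property. Since $C_c^\infty((0,T);\R^k)$ is dense in $L^2([0,T];\R^k)$ and $D_0 E^{\bar{X},T}$ is continuous (by formula (\ref{raab})), any sufficiently close smooth approximants $u^j$ of the $v^j$ still map to a basis of $T_{\bar{X}(T)}\mbox{Sp}(m)$, by openness of the set of bases.

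Next I would introduce the finite-dimensional smooth map
$$
F \, : \, \R^p \longrightarrow \mbox{Sp}(m), \qquad F(\lambda_1,\ldots,\lambda_p) := E^{\bar{X},T}\!\left( \sum_{j=1}^p \lambda_j u^j\right).
$$
We have $F(0)=\bar{X}(T)$, and the chain rule together with linearity of $u\mapsto D_0 E^{\bar{X},T}(u)$ gives
$$
d_0 F (\lambda) = \sum_{j=1}^p \lambda_j \, D_0 E^{\bar{X},T}(u^j),
$$
which by construction is a linear isomorphism from $\R^p$ onto $T_{\bar{X}(T)}\mbox{Sp}(m)$. Applying the inverse function theorem on the smooth manifold $\mbox{Sp}(m)$ (of dimension $p$), $F$ is a local diffeomorphism between a neighborhood $B(0,\nu)\subset \R^p$ of the origin and an open neighborhood of $\bar{X}(T)$ in $\mbox{Sp}(m)$ containing some $B(\bar{X}(T),\mu)\cap\mbox{Sp}(m)$.

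Finally, set $U:=F^{-1}:B(\bar{X}(T),\mu)\cap\mbox{Sp}(m)\to B(0,\nu)$, which is smooth and satisfies $U(\bar{X}(T))=0$. Writing $U=(U_1,\ldots,U_p)$, the identity $F\circ U = \mbox{Id}$ is exactly
$$
E^{\bar{X},T}\!\left( \sum_{j=1}^p U_j(X)\, u^j\right) = X
$$
for every $X\in B(\bar{X}(T),\mu)\cap \mbox{Sp}(m)$, which is the desired conclusion. The main (and only) obstacle is the density/approximation step that upgrades rough $L^2$ directions to smooth controls with support strictly inside $(0,T)$; everything else is a direct application of the inverse function theorem on a submanifold.
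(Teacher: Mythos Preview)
Your proposal is correct and follows essentially the same route as the paper: use surjectivity of $D_0E^{\bar X,T}$ and density of $C_c^\infty((0,T);\R^k)$ in $L^2$ to find $p$ smooth compactly supported controls whose images form a basis of $T_{\bar X(T)}\mbox{Sp}(m)$, then define $F(\lambda)=E^{\bar X,T}\bigl(\sum_j\lambda_j u^j\bigr)$ and apply the inverse function theorem. The paper is slightly terser about the approximation step, but the argument is identical.
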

\begin{proof}
Remember that the set of controls $u \in C^{\infty}([0,T],\R^k)$ with $supp (u) \subset (0,T)$  is dense in $L^2([0,T],\R^k)$ and from Proposition \ref{LIEPROP1}, we know that the mapping $E^{\bar{X},T} : L^2 \bigl([0,T]; \R^k\bigr) \rightarrow \mbox{Sp}(m)$ is a smooth submersion at $\bar{u} \equiv 0$. Then there are $p$ smooth controls $u^1,...,u^p: [0,T]\rightarrow \R^k$ with $Supp(u^j) \subset (0,T)$ for $j=1,...,p$ such that
\begin{equation}
\label{3}
Span \left\{DE^{\bar{X},T}(\bar{u})(u^j)\, \vert \, j=1,...,p \right\}=T_{\bar{X}(T)}\mbox{Sp}(m).
\end{equation}
Define $F : \R^p \rightarrow Sp(m)$ by 
$$
F(\lambda):=E^{\bar{X},T}\Bigr(\bar{u}+\sum_{j=1}^p \lambda_j u^j\Bigl) \qquad \forall \lambda=(\lambda_1,...,\lambda_p) \in \R^p.
$$
The function $F$ is well-defined, smooth, and satisfies $F(0)=E^{\bar{X},T}(\bar{u})=\bar{X}(T)$. Its differential at $\lambda=0$ is given by 
$$
DF(0)(\lambda)=\sum_{j=1}^p \lambda_j DE^{\bar{X},T}(\bar{u})(u^j) \quad \forall \lambda \in \R^p,
$$
hence it is invertible By (\ref{3}). By the Inverse Function Theorem, we conclude the proof.
\end{proof}
\begin{remark}
\label{remark}
The radii depend on the size of the datas (see \cite[Theorem B.1.4]{riffordbook}).
\end{remark}
The result below follows easily from Proposition \ref{PROP2}.
\begin{proposition}
\label{PROP3}
Assume that there exists $\bar{t} \in [0,T]$ such that (\ref{conditionLIEPROP}) holds. Then there are $\mu, C > 0$ such that for every $X \in \mbox{Sp}(m)$ with $\left\|X-\bar{X}(T)\right\| < \mu$, there is a $C^{\infty}$ function $u : [0,T] \longrightarrow \R^{\frac{m(m+1)}{2}}$ such that 
$$
Supp(u) \subset (0,T), \quad \left\|u\right\|_{C^k} < C \left\|X-\bar{X}(T)\right\|
$$ 
and
$$
X_u(T)=X.
$$
\end{proposition}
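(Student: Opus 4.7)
The plan is to deduce this immediately from Proposition \ref{PROP2}. That proposition already gives us, under the hypothesis (\ref{conditionLIEPROP}), smooth controls $u^1,\ldots,u^p$ with support in $(0,T)$ and a smooth map $U=(U_1,\ldots,U_p)$ defined on a neighborhood of $\bar{X}(T)$ in $\mbox{Sp}(m)$, with $U(\bar{X}(T))=0$, such that
$$
E^{\bar{X},T}\Bigl(\sum_{j=1}^{p} U_j(X)\, u^j\Bigr) = X.
$$
So the candidate control is $u := \sum_{j=1}^{p} U_j(X)\, u^j$; the remaining task is purely to extract a linear estimate of $\|u\|_{C^k}$ in terms of $\|X-\bar{X}(T)\|$.

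First I would fix the radius $\mu>0$ given by Proposition \ref{PROP2}, possibly shrinking it so that $U$ is defined and smooth on the closed ball $\overline{B(\bar{X}(T),\mu)} \cap \mbox{Sp}(m)$. Since $U$ is smooth and vanishes at $\bar{X}(T)$, the mean value inequality on $\mbox{Sp}(m)$ (equivalently, working in a local chart around $\bar{X}(T)$) produces a constant $L>0$ such that
$$
\|U(X)\| \,\leq\, L\, \|X-\bar{X}(T)\| \qquad \forall X \in B(\bar{X}(T),\mu) \cap \mbox{Sp}(m).
$$
In particular, each coordinate $|U_j(X)|$ is bounded by $L\|X-\bar{X}(T)\|$.

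Next I would verify the $C^k$-estimate and the support condition. Because each $u^j$ is $C^\infty$ and compactly supported in $(0,T)$, the finite linear combination $u(t)=\sum_j U_j(X)u^j(t)$ is also $C^\infty$ with $\mathrm{Supp}(u)\subset(0,T)$. For the $C^k$-norm,
$$
\|u\|_{C^k} \,\leq\, \sum_{j=1}^{p} |U_j(X)|\, \|u^j\|_{C^k} \,\leq\, L\Bigl(\sum_{j=1}^{p}\|u^j\|_{C^k}\Bigr)\,\|X-\bar{X}(T)\|.
$$
Setting $C := L\sum_{j=1}^{p}\|u^j\|_{C^k}$ gives $\|u\|_{C^k} < C\|X-\bar{X}(T)\|$ (after harmlessly enlarging $C$ to absorb a strict inequality, or by shrinking $\mu$). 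The identity $X_u(T)=E^{\bar{X},T}(u)=X$ is precisely the conclusion of Proposition \ref{PROP2}.

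There is no real obstacle here: the only non-trivial input is Proposition \ref{LIEPROP1}, which is already proved and used in Proposition \ref{PROP2}. The mild point to be careful about is that the constant $C$ and the radius $\mu$ genuinely depend on the data $A(\cdot), B_1,\ldots,B_k, \bar X, T$ through the Lipschitz constant $L$ of $U$ and through the norms $\|u^j\|_{C^k}$, which is exactly what Remark \ref{remark} alludes to via \cite[Theorem B.1.4]{riffordbook}.
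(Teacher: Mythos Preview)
Your proposal is correct and matches the paper's approach: the paper gives no detailed proof of this proposition, stating only that it ``follows easily from Proposition \ref{PROP2},'' and your argument is exactly the natural unpacking of that remark --- take $u=\sum_j U_j(X)u^j$, use the Lipschitz bound on $U$ near $\bar X(T)$ (since $U$ is smooth with $U(\bar X(T))=0$), and combine with the fixed $C^k$-norms of the $u^j$.
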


\section{Proof of Theorem 1.1}
 Since $M$ is compact, there exists $\tau > 0$ such that
$$
\gamma \Bigl((1-\tau,1)\Bigr) \cap  \gamma \Bigl([0,1-\tau]\Bigr)= \emptyset,
$$
for every geodesic $\gamma$ arc of $g_0$.
Let $\gamma: \left[0,1\right] \longrightarrow S^gM$ be a geodesic arc of $g_0$ of length $1$ (this can be obtained by scaling).

\vspace*{4cm}

\begin{figure}[!h] 
\centering 
\newgray{grayml}{.9}
\psset{unit=1cm}
\begin{pspicture}(3,3)(6,3)
\psframe[linewidth=2pt,framearc=.3,fillstyle=solid,
fillcolor=yellow](2.5,3.85)(9.65,6.15)

\pscurve[showpoints=false]{-}(8.5,5)(6,5.2)(2,5.2)(-0.5,5)
\pscurve[showpoints=false,linewidth=0.4pt]{-}(7.6,5)(7.8,4.4)(8.7,3.9)(9.6,5)(8.7,6.1)(7.8,5.6)(7.6,5)
\psdots[dotsize=0.7pt](8.5,4.9)(8.5,4.85)(8.5,4.8)(8.5,4.75)(8.5,4.7)(8.5,4.65)(8.5,4.6)(8.5,4.55)(8.5,4.5)(8.5,4.45)(8.5,4.4)(8.5,4.35)(8.5,4.3)(8.5,4.25)
\psline[showpoints=false,linewidth=0.1pt]{<->}(2.5,4.5)(8.5,4.5)
\pscurve[showpoints=false]{-}(8.3,5.5)(6,5.5)(2.5,5.23)
 \rput(9,4.7){$\gamma(1)$}
 \rput(-0.5,4.7){$\gamma(0)$}
 \rput(1.7,4.85){$\gamma(1-\tau)$}
 \rput(5.5,4.7){$\tau > 0$}
 \rput(5.5,5.7){$\tilde{u}=u$}
 \rput(1,5.4){$\tilde{u}=0$}
 \rput(8.6,5.5){$X$}
   \psdots[dotsize=4pt](8.5,5) (-0.5,5)(2.5,5.23) (8.3,5.5)

\end{pspicture}
\caption{Avoiding self-intersection}
\end{figure}
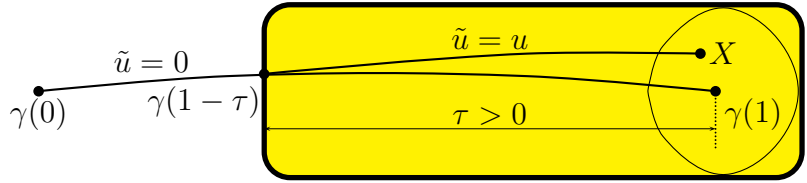
Fix a set of Fermi coordinates $\left\{(t,x)\right\}$ along $\gamma$. The linearized Poincar\'e map $P_{g_{0}}(\gamma)(t)$ satisfies a first order system of the form (see \cite[Section 3]{rr12})
$$
\dot{W}_0(t)=\begin{pmatrix} 0 & I_{n-1} \\ -K(t) & 0 \end{pmatrix} W_0(t) \quad t \in [1-\tau,1],
$$
where $K(t)$ represents the matrix of the sectional curvature of the metric $g_0$. In fact, if $g_0:=(g_0^{kl})_{k,l=0,...,n-1}$, we have for any $i,j=1,...,n-1$,
$$
K(t)_{ij}=-\frac{1}{2}\frac{\partial^2}{\partial x^i \partial x^j}g_0^{00}(t,0).
$$
Let $f: M \longrightarrow \R$ be a $C^2$ function with $f(t,0)=0$ and $\frac{\partial}{\partial x^k}f(t,0)=0$\, $\forall k=1,...,n-1$. Let $u:=(u_{ij})_{i,j=1,...,n-1}$ be the function defined by 
$$
 u_{ij}(t)=-\frac{1}{2}\frac{\partial^2}{\partial x_ix_j}f(t,0), \quad \forall i,j=1,...,n-1 \quad and \quad t \in [1-\tau,1].
 $$
Consider the metric $g_u:=e^f g_0$. The linearized Poincar\'e map $P_{g_{u}}(\gamma)(t)$ is given by 
\begin{equation}\label{1}
\dot{W}_u(t)=\begin{pmatrix} 0 & I_{n-1} \\ -K_{g_u} & 0 \end{pmatrix} W_u(t)  \quad t \in [1-\tau,1],
\end{equation}
where 
\begin{equation}\label{2}
K_{g_u}(t)=K(t) + \sum_{i=1}^{m}u_{ii}(t)E(ii)+\sum_{1 \leq i<j}^{m}u_{ij}(t)E(ij),
\end{equation}
with $E(ij), 1 \leq i\leq j\leq m$ are the symmetric $n-1 \times n-1$ matrices defined by
$$
\mbox{and} \quad \left(E(ij)\right)_{k,l} =   \delta_{ik} \delta_{jl} + \delta_{il} \delta_{jk}  \qquad \forall i, j =1, \ldots,n-1.
$$
Set $m=n-1$, $k:=m(m+1)/2$. The formulas (\ref{1})-(\ref{2}) giving $P_{g_{u}}(\gamma)(t)$ can be viewed as a control system of the form
\begin{eqnarray}\label{syscontrol3}
\dot{X}(t) = A(t) X(t) + \sum_{i\leq j=1}^m u_{ij}(t) \mathcal{E}(ij) X(t),
\end{eqnarray}
where the $2m \times 2m$ matrices $A(t), \mathcal{E}(ij) $ are defined by
$$
A(t) :=  \left( \begin{matrix} 0 & I_m \\ -K(t) & 0 \end{matrix} \right) \qquad \forall t \in [1-\tau,1]
$$
and
$$
\mathcal{E}(ij) :=   \left( \begin{matrix} 0 & 0 \\ E(ij) & 0 \end{matrix} \right).
$$
To avoid eventually self-intersection at $\gamma(1)$, we assume that the support of $u$ is included in $(1-\tau+\delta,1-\delta)$, with $0 < \delta < \tau$.\\
It is clear that if for every final state $X \in \mbox{Sp}(m)$ close to $\gamma(1)$ there is a control $u\in L^2 \bigl([1-\tau,1]; \R^k\bigr)$ which steers $\gamma(1-\tau)$ to $X$ (see figure 1), then the control $\tilde{u}$ defined by
$$
\tilde{u}(t) := \left\{ \begin{array}{cl}0 &  \mbox{ if } t \in [0,1-\tau]\\
u(t) & \mbox{ otherwise}.
\end{array}
\right.
\qquad \mbox{for a.e. } t \in [0,1]. 
$$
steers $\gamma(0)$ to $X$. For sake of simplicity assume from now that $[1-\tau,1]=[0,1]$.\\

Let us first prove the higher-dimensional ($n \geq 3$) Franks' lemma for geodesic flows.\\ 
The Jacobi matrix $K(t)$ is real and symmetric, so it is diagonalisable and there are  $\lambda_1(t),...,\lambda_m(t) \in \R$, $P(t) \in GL_m(\R)$ such that $K(t)=P(t)^{-1}diag\Bigl(\lambda_1(t),...,\lambda_m(t)\Bigr)P(t)$. 
Recall that by hypothesis,
\begin{equation}
\exists \, \,  \bar{t} \in [0,1] \,\, / \,\, \lambda_i(\bar{t}) \ne \lambda_j(\bar{t}), \, \,  \forall i \ne j.
\label{Contreras condition}
\end{equation}
Hence if we change our coordinates, we can suppose that $K(\bar{t})= diag\Bigl(\lambda_1(\bar{t}),...,\lambda_m(\bar{t})\Bigr)$.\\
Since our control system has the form (\ref{syscontrol}), all the results gathered in Section \ref{prelcontrol} apply. Since the $\mathcal{E}(ij)$ do not depend on time, we check easily that the matrices $B_{ij}^0, B_{ij}^1, B_{ij}^2, B_{ij}^3$ associated to our system are given by
$$
\left\{
\begin{array}{l}
B_{ij}^0 (t) = \mathcal{E}(ij) \\
B_{ij}^1(t) = \left[ \mathcal{E}(ij), A(t)\right] \\
B_{ij}^2(t) =   \left[ \left[ \mathcal{E}(ij), A(t)\right], A(t) \right] \\
B_{ij}^3(t) =  \dot{B}_{ij}^2(t)+\left[ \left[ \left[ \mathcal{E}(ij), A(t)\right], A(t) \right], A(t) \right],
\end{array}
\right.
$$
for every $t \in [0,1]$. An easy computation yields for any $i, j =1, \ldots,m$ with $i\leq j$ and any $t\in [0,1]$,
$$
\left[\mathcal{E}(ij), A (t)\right] = \left( \begin{matrix} -E(ij) & 0 \\ 0 & E(ij) \end{matrix} \right),
$$
$$
\left[ \left[ \mathcal{E}(ij), A(t) \right], A(t) \right] =  \left( \begin{matrix} 0 & -2E(ij) \\ -E(ij)K(t)-K(t) E(ij) & 0 \end{matrix} \right),
$$
$$
\left[ \left[ \left[ \mathcal{E}(ij), A(t)\right], A(t) \right], A(t) \right] = \left( \begin{matrix} 3E(ij)K(t)+K(t)E(ij) & 0 \\ 0 & -E(ij)K(t)-3K(t) E(ij) \end{matrix} \right).
$$
We need to show that $S=\mbox{Span} \Bigl\{ B_{ij}^l(\bar{t}) \, \vert \, 1 \leq i \leq j \leq m \,\, and \, \, l=0,1,2,3\Bigr\}$ has dimension $d=2m(2m+1)/2$. For all $1 \leq i \leq j \leq k$ we have\\
\\
$
\left( \begin{matrix} 3E(ij)K(\bar{t})+K(\bar{t})E(ij) & 0 \\ 0 & -E(ij)K(\bar{t})-3K(\bar{t}) E(ij) \end{matrix} \right)= 
$
$$
 2 \left( \begin{matrix} E(ij)K(\bar{t})+K(\bar{t})E(ij) & 0 \\ 0 & -E(ij)K(t)-K(t) E(ij) \end{matrix} \right) + \left( \begin{matrix} \left[E(ij) , K(\bar{t})\right] & 0 \\ 0 & \left[E(ij) , K(\bar{t})\right]) \end{matrix} \right).
$$
Moreover, it holds that
$$
\left( \begin{matrix} 0 & 0 \\ -E(ij)\dot{K}(\bar{t})-\dot{K}(\bar{t})E(ij) & 0 \end{matrix} \right) \in \mbox{Span} \Bigl\{ B_{ij}^0(\bar{t}) \, \vert \, 1 \leq i \leq j \leq m \Bigr\},
$$
and
$$
\left( \begin{matrix} E(ij)K(\bar{t})+K(\bar{t})E(ij) & 0 \\ 0 & -E(ij)K(t)-K(t) E(ij) \end{matrix} \right) \in \mbox{Span} \Bigl\{ B_{ij}^1(\bar{t}) \, \vert \, 1 \leq i \leq j \leq m \Bigr\}.
$$
Let's now compute the $m \times m$ matrices $\left[E(ij) , K(\bar{t})\right]$ for all $1 \leq i < j \leq m$ :
$$
\left[E(ij) , K(\bar{t})\right]:=(c_{rs})_{r,s} \,\, with
\left\{
\begin{array}{l}
c_{rs}=0 \,\,\mbox{if} \,\,(r,s) \ne (i,j) \,\,\mbox{or}\,\, (r,s) \ne (j,i), \\
 c_{ij}= \lambda_j(\bar{t})-\lambda_i(\bar{t}), \\
c_{ji}=\lambda_i(\bar{t})-\lambda_j(\bar{t}).\\
\end{array}
\right.\\
$$
Hence, using the condition (\ref{Contreras condition}) we obtain 
$$
\mbox{span} \left\{ \left( \begin{matrix} \left[E(ij) , K(\bar{t})\right] & 0 \\ 0 & \left[E(ij) , K(\bar{t})\right]) \end{matrix} \right) \, \vert \,  i \leq j  \right\} = \mbox{Span} \left\{ \left( \begin{matrix} F(pq) & 0 \\ 0 & F(pq)) \end{matrix} \right) \, \vert \,  p < q  \right\},\\
$$
where $F(pq)$ is the skew-symmetric matrix defined by
$$
\left( F(pq)\right)_{rs} := \delta_{rp}\delta_{sq} - \delta_{rq}\delta_{sp}.
$$
Therefore we have
$$
S=\mbox{Span} \left\{ B_{ij}^l(\bar{t}), \left( \begin{matrix} F(pq) & 0 \\ 0 & F(pq)) \end{matrix} \right) \, \vert \, 1 \leq i \leq j \leq m \, , \, l=0,1,2\,\, and \,\,1 \leq p < q \leq m \right\}.
$$
This allow us to compute the dimension of $S$. In fact, since the matrices $ \mathcal{E}(ij)$ form a basis of the vector space of symmetric matrices $\mathcal{S}(m)$,  we check easily that the vector space
$$
\mbox{Span} \Bigl\{ \mathcal{E}(ij), \left[ \left[ \mathcal{E}(kl), A(t) \right], A(t) \right] \, \vert \, i, j, k, l\Bigr\}
$$
has dimension $m(m+1)$. It remains to check that the rest spans a space of dimension $d-m(m+1)/2=m^2$. The spaces respectively spanned by
$$
\Bigl\{  \left[ \mathcal{E}(ij), A(t)\right] \, \vert \, i,j \Bigr\}
$$
and
$$
\left\{  \left( \begin{matrix} F(pq) & 0 \\ 0 & F(pq) \end{matrix} \right)\, \vert \, p,q  \right\}
$$
are orthogonal with respect to the scalar product $\mbox{tr} (P^*Q)$. The first has dimension $m(m+1)/2$. It remains to show that the second one has dimension $m(m-1)/2$. The second space is generated by the matrices of the form
$$
   \left( \begin{matrix} F(pq) & 0 \\ 0 & F(pq) \end{matrix} \right)
$$
with $1 \leq p < q \leq m$. Finally, the condition (\ref{conditionLIEPROP}) is satisfied and we conclude easily using Propositions \ref{LIEPROP1}, \ref{PROP3} and a compactness argument (see Remark \ref{remark}).\\

Let us now provide the proof of Franks' lemma for geodesic flows on surfaces.\\
Set $m=1$, the control system (\ref{syscontrol3}) becomes
$$
\dot{X}(t) = A(t) X(t) +  u_{11}(t) \mathcal{E}(11) X(t),
$$

where the $2 \times 2$ matrices $A(t), \mathcal{E}(11)$ are defined by
$$
A(t) :=  \left( \begin{matrix} 0 & 1 \\ -K(t) & 0 \end{matrix} \right) \qquad \forall t \in [0,1]
$$
and
$$
\mathcal{E}(11) :=   \left( \begin{matrix} 0 & 0 \\ 1 & 0 \end{matrix} \right).
$$
Since our control system has the form (\ref{syscontrol}), all the results gathered in Section \ref{prelcontrol} apply. Since the $\mathcal{E}(11)$ do not depend on time, we check easily that the matrices $B_{11}^0, B_{11}^1, B_{11}^2$ associated to our system are given by
$$
\left\{
\begin{array}{l}
B_{11}^0 (t) = \mathcal{E}(11) \\
B_{11}^1(t) = \left[ \mathcal{E}(11), A(t)\right] \\
B_{11}^2(t) =   \left[ \left[ \mathcal{E}(11), A(t)\right], A(t) \right],
\end{array}
\right.
$$
for every $t \in [0,T]$. An easy computation yields for any $t\in [0,T]$,
$$
\left[\mathcal{E}(11), A (t)\right] = \left( \begin{matrix} -1 & 0 \\ 0 & 1 \end{matrix} \right),
$$
$$
\left[ \left[ \mathcal{E}(11), A(t) \right], A(t) \right] =  \left( \begin{matrix} 0 & -2 \\ -2K(t) & 0 \end{matrix} \right).
$$

We check easily that $dim \Bigl( \mbox{Span} \Bigl\{ B_{11}^0(0), \, B_{11}^1(0), B_{11}^2(0) \Bigr\} \Bigr) = 3 = dim \Bigl( T_{I_{2}} \mbox{Sp}(1)\Bigr).$\\
So the condition (\ref{conditionLIEPROP}) is satisfied and the result follows from Propositions \ref{LIEPROP1}, \ref{PROP3} and a compactness argument.

\section{Proof of Proposition 1.2}
Let $\mathcal{F}:=\left\{\xi_1,...,\xi_N \right\}$ be a finite set of geodesic segments that are transverse to $\gamma$, with for every $i=1,...,N$, $\mathcal{\xi}_i$ intersect $\gamma$ at the point $\gamma(t_i)$, where $t_i \in [0,1]$.
\vspace*{7cm}

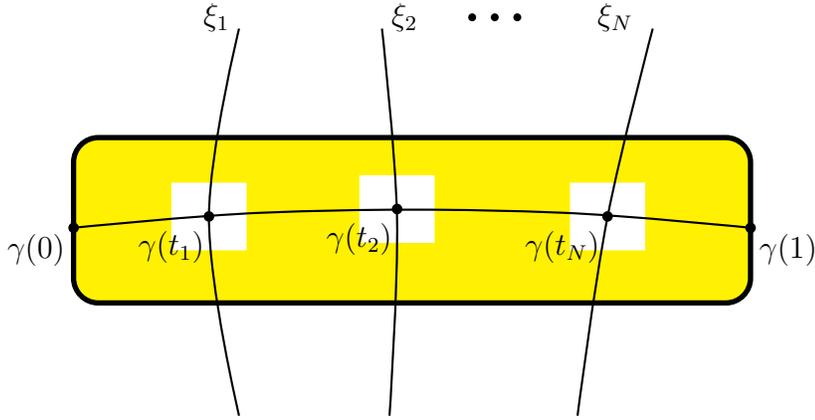
\begin{figure}[!h] 
\centering 
\newgray{grayml}{.9}
\psset{unit=1cm}
\begin{pspicture}(3,2)(6,2)
\psframe[linewidth=2pt,framearc=.3,fillstyle=solid,fillcolor=yellow](-0.5,4)(8.5,6.2)
\psframe*[linecolor=white](.8,4.7)(1.8,5.6)
\psframe*[linecolor=white](3.3,4.8)(4.3,5.7)
\psframe*[linecolor=white](6.1,4.7)(7.1,5.6)
\pscurve[showpoints=false]{-}(8.5,5)(6,5.2)(2,5.2)(-0.5,5)
\pscurve[showpoints=false]{-}(1.7,7.65)(1.3,5.15)(1.7,2.5)
\pscurve[showpoints=false]{-}(3.6,7.65)(3.8,5.15)(3.7,2.5)
\pscurve[showpoints=false]{-}(7.2,7.65)(6.6,5.15)(6.2,2.5)
 \rput(9,4.7){$\gamma(1)$}
 \rput(-1,4.7){$\gamma(0)$}
 \rput(0.8,4.75){$\gamma(t_1)$}
  \rput(3.3,4.85){$\gamma(t_2)$}
   \rput(6,4.75){$\gamma(t_N)$}
\rput(1.4,7.8){$\xi_1$} 
\rput(3.9,7.8){$\xi_2$} 
\rput(6.7,7.8){$\xi_N$} 
 
\psdots[dotsize=4pt](8.5,5) (-0.5,5)(1.3,5.15)(3.8,5.25)(6.6,5.15)
\psdots[dotsize=3pt](4.8,7.8)(5.1,7.8)(5.4,7.8)

\end{pspicture}
\caption{Avoiding a finite number of transverse geodesics}
\end{figure}
From Proposition \ref{PROP2}, we know that there are $p$ smooth controls $u^1,\cdots,u^p : [0,T] \rightarrow \R^k$ with $Supp(u^j) \subset (0,1)$ for $j=1,...,p$, such that the following End-Point mapping (associated to the control system (\ref{syscontrol3}))
$$
\begin{array}{rcl}
E^{I_{2m},1} \, : \, Span\left\{u^1,\cdots,u^p \right\}  & \longrightarrow & \mbox{Sp}(m) \\
\sum _{i=1}^p {\lambda_i u^i} & \longmapsto & X_{I_{2m},\sum _{i=1}^p {\lambda_i u^i}}(1)
\end{array}
$$
is a local diffeomorphism.
Take now $p$ $C^\infty$-functions $\tilde{u}^1,\cdots,\tilde{u}^p : [0,T] \rightarrow \R^k$ such that for every $j=1,...,p$, $Supp(\tilde{u}^j) \subset (0,1)$, $\tilde{u}^j$ vanishes in a neighborhood $\mathcal{N}_i$ of $t_i$ and $\tilde{u}^j$ is a equal to $u^j$ outside of $\mathcal{N}_i$. By $C^1$ regularity of the End-Point mapping $E^{I_{2m},1}$, it holds that the map
$$
\begin{array}{rcl}
\tilde{E}^{I_{2m},1} \, : \, Span\left\{\tilde{u}^1,\cdots,\tilde{u}^p \right\}  & \longrightarrow & \mbox{Sp}(m) \\
\sum _{i=1}^p {\lambda_i \tilde{u}^i} & \longmapsto & X_{I_{2m},\sum _{i=1}^p {\lambda_i \tilde{u}^i}}(1)
\end{array}
$$
remains a local diffeomorphism, which concludes the proof.

%
\newcommand{\noopsort}[1]{}\def\polhk#1{\setbox0=\hbox{#1}{\ooalign{\hidewidth
  \lower1.5ex\hbox{`}\hidewidth\crcr\unhbox0}}} \def\cprime{$'$}
  \def\cydot{\leavevmode\raise.4ex\hbox{.}} \def\cprime{$'$} \def\cprime{$'$}
  \def\cprime{$'$} \def\cprime{$'$} \def\cprime{$'$}
  \def\polhk#1{\setbox0=\hbox{#1}{\ooalign{\hidewidth
  \lower1.5ex\hbox{`}\hidewidth\crcr\unhbox0}}} \def\cprime{$'$}
  \def\cprime{$'$} \def\cprime{$'$} \def\dbar{\leavevmode\hbox to
  0pt{\hskip.2ex \accent"16\hss}d}

\signal
\end{document}